\numberwithin{equation}{section}
\newtheorem{theorem}{Theorem}[section]
\newtheorem{lemma}[theorem]{Lemma}
\newtheorem{proposition}[theorem]{Proposition}
\newtheorem{corollary}[theorem]{Corollary}
\newtheorem{remark}[theorem]{Remark}
\newtheorem{definition}[theorem]{Definition}
\newtheorem{example}[theorem]{Example}
\newtheorem{hypothesis}[theorem]{Hypothesis}
\newcommand{\bth}{\begin{theorem}}
\renewcommand{\eth}{\end{theorem}}
\newcommand{\bpr}{\begin{proposition}}
\newcommand{\epr}{\end{proposition}}
\newcommand{\bco}{\begin{corollary}}
\newcommand{\eco}{\end{corollary}}
\newcommand{\ble}{\begin{lemma}}
\newcommand{\ele}{\end{lemma}}
\newcommand{\bpf}{\begin{proof}}
\newcommand{\epf}{\end{proof}}
\newcommand{\bex}{\begin{example}}
\newcommand{\eex}{\end{example}}
\newcommand{\bdf}{\begin{definition}}
\newcommand{\edf}{\end{definition}}
\newcommand{\bre}{\begin{remark}}
\newcommand{\ere}{\end{remark}}
\newcommand{\beq}{\begin{equation}}
\newcommand{\eeq}{\end{equation}}
\newcommand{\bal}{\begin{aligned}}
\newcommand{\eal}{\end{aligned}}
\newcommand{\beqr}{\begin{eqnarray*}}
\newcommand{\eeqr}{\end{eqnarray*}}
\def\P{{\mathbb P}}
\def\R{{\mathbb R}}
\def\E{{\mathbb E }}
\def\N{\mathbb N}
\def\hh{{\vskip 2mm \noindent }}
 \def\vv{\vskip 1mm \noindent}
\begin{document}

\title {\Huge On
linear  evolution equations
 with  cylindrical L\'evy noise
 }

\date{}

\maketitle


\begin{center}

 Enrico Priola \footnote
 { \noindent \! \! \!
  Supported by the M.I.U.R. research projects Prin 2004 and
 2006 ``Kolmogorov equations'' and
 by the Polish Ministry of Science and Education project 1PO 3A 034
29 ``Stochastic evolution equations with L\'evy noise''.}

\vspace{ 2 mm } {\small  \it Dipartimento di Matematica,
Universit\`a di Torino, \par
  via Carlo Alberto 10,  \ 10123, \ Torino, Italy. \par
 e-mail \ \ enrico.priola@unito.it }
\\ \ \\
Jerzy Zabczyk   \footnote { \noindent \! \! Supported by the
Polish Ministry of Science and Education project 1PO 3A 034 29
``Stochastic evolution equations with \! \! L\'evy noise''.}

\vspace{ 2 mm} {\small  \it Instytut Matematyczny,   Polskiej
Akademii Nauk,
\par ul. Sniadeckich 8, 00-950, \ Warszawa, Poland.\par
 e-mail \ \ zabczyk@impan.gov.pl }
\end{center}


\vskip 7mm
 \noindent {\bf Mathematics  Subject Classification (2000):} \ 47D07, 60H15,
  60J75, 35R60.
 \ \par \ \par

\vspace{1.5 mm}

\noindent {\bf Abstract:} We study an infinite-dimensional
 Ornstein-Uhlenbeck  process $(X_t)$ in a given
  Hilbert space $H$. This is
   driven by a cylindrical  symmetric  L\'evy process without a
Gaussian component and taking values in a Hilbert space $U$ which
 usually contains $H$.
We give if and only if conditions under which $X_t$ takes values
in $H$ for some $t>0$ or for all $t>0$.
  Moreover, we prove irreducibility for
 $(X_t)$.

\section{Introduction and notation}

There is an increasing interest in   stochastic evolution
equations driven by  L\`evy noise. We refer to the recent
monograph \cite{PeZ} which also discusses several
 applications.

In this note  we concentrate on  the   linear stochastic
 differential equation
 \beq
\label{ab}
\begin{cases} dX_t  = A X_t dt  +
    dZ_t, \,\,\,\,\, t \ge 0,\\
       X_0  = x \in  H,
 \end{cases}\eeq
   in a real separable Hilbert space  $H$ driven by
    an infinite dimensional  cylindrical
    symmetric  L\'evy process $Z= (Z_t)$.
 The  process  $Z$
   may  take values in a Hilbert space $U$ usually
greater than $H$.
    Moreover we assume that $A$
   is a linear possibly unbounded operator
  which generates a $C_0$-semigroup $(e^{tA})$ on
   $H$.

Solutions of \eqref{ab}, called (generalised) Ornstein-Uhlenbeck
processes,
    have recently received
  a lot of attention  (see, for instance,
  \cite{Ch},
   \cite{BRS},
    \cite{FR},
    \cite{Dawson},
   \cite{LR}, \cite{PZ4},
   \cite{PeZ}, \cite{PZ7} and \cite{BZ}). Transition semigroups determined
   by  solutions $X = (X_t^x)$ to \eqref{ab} are also
  studied under the name of
 generalized Mehler semigroups.

 In the  case  when $Z$ is a cylindrical Wiener process the theory of
 equations \eqref{ab} is well understood (see \cite{DZ}, \cite{DZ1}
  and the references therein).
  The situation changes completely
  in  the L\`evy noise case and new phenomena appear.
  For instance,  the c\`adl\`ag property  of trajectories in $H$
 can be proved only
  in   very special cases (see Remark \ref{kotelenez})  and
 is an open question in  general. Note that
  in \cite{FR} it is proved that trajectories of $(X_t^x)$
    are
  c\`adl\`ag only in some enlarged Hilbert space  containing
  $H$.

  In this note  we consider
    cylindrical L\`evy process  $Z= (Z_t)$
   defined  by
the orthogonal expansion
\begin{equation}\label{ee}
 Z_t = \sum_{n \ge 1} \beta_n Z_t^n e_n,\;\;\;\; t \ge 0,
\end{equation}
 where  $(e_n)$ is an orthonormal basis of $H$. We also   assume
\begin{hypothesis} \label {bas}
 $Z^n=(Z_t^n)$ are  independent,  real
 valued,  symmetric, identically distributed  L\'evy
  processes without a Gaussian part
   defined on a fixed stochastic basis.
   Moreover,
    $(\beta_n)$ is a given (possibly unbounded)
   sequence of  {\it positive }  real numbers.
\end{hypothesis}
In our previous paper \cite{PZ7},  we have considered the case in
which
 $(Z_t^n)$ are independent,  real
 valued, normalized, symmetric  $\alpha$-stable
  processes, $\alpha \in (0,2)$.
   For the linear equation \eqref{ab} in \cite{PZ7} we have proved
 $p$-integrability  of   trajectories in $H$, $p \in (0, \alpha)$,
   and characterized the  support
   of $(X_t^x, X_T^x)$ in $L^p(0,T; H) \times H$. Moreover, we have
 established the
   strong Feller property for the transition Markov semigroup
    associated to  \eqref{ab}.
 We are not able to prove such results in the present more general
situation.

This note can be considered as a preliminary step towards
 an extension of \cite{PZ7}  to general L\'evy processes.
 In fact
  in Theorem \ref{bo1} we provide if and only if
conditions under
 which $(X^x_t)$ takes values in $H$. It turns out that
  if there exists a positive time $t_0$ such that $X_{t_0}^x \in H$,
  $\P$-a.s.,
   then for all $t>0$, we have that $X_t^x \in H$,  $\P$-a.s..
 In Proposition \ref{bo11} we
  consider a
    class of symmetric one dimensional L\'evy processes
  $Z_t^n$, which includes the $\alpha$-stable processes,
   and which satisfies
  the conditions
  of Theorem \ref{bo1}.
  For
such processes we also show existence and uniqueness of  invariant
measure. The Markov  property and irreducibility are  proved in
  Theorems \ref{bo1} and  \ref{irr}.

The results of the paper
 apply in particular  to stochastic heat
 equations with Dirichlet boundary conditions (see Example \ref{E1}).

 Let us mention that
  in the recent paper \cite{BZ} a different cylindrical L\'evy noise
$Z$ is studied
 by subordinating a  cylindrical Wiener process, given by
  \eqref{ee} with $(Z_t^n)$ independent real valued Wiener
  processes.  It is difficult to judge at the moment which class
   of cylindrical L\'evy noises will suit  better
 modelling purposes.

 As far as the strong Feller property for \eqref{ab} is
concerned
  we stress  two different difficulties.
   One difficulty is related to the fact that very rarely   for
  non-Gaussian  infinitely divisible
 measures  in Hilbert spaces formulae for the Radon-Nikodym
 derivatives are known.
 Another problem is that the
  well-known Bismut-Elworthy-Li formula
 is not available in the non-Gaussian
 case.
 A related formula, but requiring a non trivial Gaussian
 component in the L\'evy noise,
 was established in finite dimensions in \cite{PZ6} and
 generalized to infinite dimensions in \cite{MPR}.

\ \vskip 2mm   The space  $H$ will denote  a real separable
   Hilbert space with inner product
    $\langle \cdot, \cdot \rangle$ and norm
$|\cdot|$.
 We will fix an  orthonormal basis
   $(e_n)$ in $H$. Through
    the basis $(e_n)$ we will often identify $H$
 with  $l^2$.
  More generally, for a given sequence $\rho= (\rho_n)$
 of real numbers, we set
   \beq \label{rho}
 l^2_{\rho} = \{ (x_n) \in \R^{\N} \,\, :\, \sum_{n \ge 1} x_n^2
 \rho_n^2 < + \infty \}.
\eeq
 The space
  $l^2_{\rho}$ becomes a separable Hilbert space with the inner
 product: $\langle x,y\rangle = \sum_{n \ge 1} x_n y_n
 \, \rho_n^2$, for $x = (x_n)$, $y = (y_n) \in l^2_{\rho}$.

{\vskip 2mm} Let us recall that  a  L\'evy process $(Z_t)$ with
values in $H$ is an
 $H$-valued process defined  on
 some stochastic basis
  $(\Omega, {\cal F}, ({\cal F}_t)_{t \ge 0}, \P)$,
 having stationary independent increments,
  c\`adl\`ag  trajectories,
  and such that $Z_0 =0$, $\P$-a.s..
One has that
 \begin{equation}\label{zt}
\E [e^{i \langle Z_t , s \rangle }]  = \exp ( - t \psi (s) ), \; s
\in H,
 \end{equation}
where the exponent  $\psi$ can be expressed by the following
infinite dimensional  L\'evy-Khintchine formula,
 \begin{equation}\label{psi} \psi(s)= \frac{1}{2} \langle
Q s,s \rangle - i \langle a, s\rangle - \int_{H} \Big(  e^{i \langle
s,y \rangle }  - 1 - \frac { i \langle s,y \rangle} {1 + |y|^2 }
\Big ) \nu (dy), \;\; s \in H.
 \end{equation}
 Here $Q$ is a  symmetric non-negative trace class operator on
$H$, $a \in H$ and $\nu$ is the  L\'evy measure or the jump
intensity measure  associated to $(Z_t)$, i.e., $\nu$
 is
  a $\sigma$-finite Borel measure on $H $
 such that $\nu (\{ 0\})=0$ and
   $\int_{H} (|y|^2 \wedge 1) \nu (dy) < + \infty$
 (see \cite{sato} and \cite{PeZ}).

\hh  According to Proposition  \ref{d}
  our cylindrical L\'evy process
 $Z$ appearing in \eqref{ab}
  is a L\'evy process
  taking values in the Hilbert space
   $U =l^2_{\rho}$, with a properly
 chosen weight ${\rho}$ (see  Remark \ref{levy}).

\section{The  main result  }

 Concerning equation \eqref{ab}, we make
the following assumption.
\begin{hypothesis} \label {basic}

 \noindent   $A : D(A) \subset H \to H$
 is a self-adjoint operator such that the fixed  basis $(e_n)$ of $H$
 verifies:
  $(e_n ) \subset D(A)$,  $A e_n  =  - \gamma_n e_n$
  with $\gamma_n >0$, for any  $n \ge 1$, and $\gamma_n \to +
  \infty $.
\end{hypothesis}
\noindent Clearly, under (i),
 $
 D(A) = \{  x= (x_n) \in H \, :\, \sum_{n \ge 1} x_n^2
 \gamma_n^2 < +\infty
 \}.
$ In addition  $A$ generates a compact $C_0$-semigroup
  $(e^{tA})$ on $H$ such that
$$
e^{tA} e_k = e^{- \gamma_k t} e_k,\;\; \;\; k \in \N, \;\;\; t \ge
0.
$$
Hypothesis \ref{basic} is also considered  in \cite{PZ7} when
 $(Z^n_t)$ are symmetric $\alpha$-stable L\'evy processes,
 $\alpha \in (0,2)$.

 Recall that  we are assuming that   $(Z_t^n)$
 are defined on the same
stochastic basis $(\Omega, {\cal F}, ({\cal F}_t), \P)$
 satisfying the usual assumptions.

 Since the law of  $Z_t^n$ is symmetric, we have,
 for any $n \ge 1$, $t \ge 0,$
 \begin{equation} \label{ps}
 \E [e^{i h Z^n_t}] = e^{- t \psi (h)},\;\;\; h \in \R,
\end{equation}
 where
 \beq
\label{psu}
 \psi (h)  = \int_{\R} \big( 1 -  \cos  (hy ) \big ) \nu (dy), \;\; h
 \in \R,
\eeq
 and  the L\'evy measure $\nu$ is  symmetric
  (i.e., $\nu (A)= \nu (-A)$, for any Borel set $A \subset \R$).
  This follows by the next elementary result.

  \begin{proposition} \label{use} A one dimensional L\'evy process
  $L = (L_t)$ without Gaussian part has symmetric distribution
   at some time $t>0$  if
  and only if  its L\'evy measure $\nu$ is symmetric.
   Moreover, if  $\nu$ is symmetric
 then $L_t$  has symmetric distribution
   at any time $t\ge 0$.
 \end{proposition}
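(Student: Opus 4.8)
The plan is to recast everything through the characteristic exponent of $L$. By the L\'evy--Khintchine formula \eqref{psi} in the scalar, Gaussian-free case, $\E[e^{ihL_t}]=e^{-t\psi(h)}$, $h\in\R$, with $\psi(h)=-iah-\int_\R\big(e^{ihy}-1-\tfrac{ihy}{1+y^2}\big)\,\nu(dy)$. The starting observation is elementary: a real random variable $X$ has symmetric law iff its characteristic function is real-valued (the characteristic function of $-X$ being the complex conjugate of that of $X$). Since $|e^{-t\psi(h)}|=e^{-t\,\mathrm{Re}\,\psi(h)}>0$, the value $e^{-t\psi(h)}$ is real exactly when $t\,\mathrm{Im}\,\psi(h)\in\pi\mathbb{Z}$. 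I would then prove the key reduction: \emph{$L_t$ is symmetric for some $t>0$ if and only if $\mathrm{Im}\,\psi\equiv0$ on $\R$, and in that case $L_t$ is symmetric for every $t\ge0$.} The ``if'' part is immediate, since $\mathrm{Im}\,\psi\equiv0$ forces $\E[e^{ihL_t}]=e^{-t\,\mathrm{Re}\,\psi(h)}\in(0,1]$, real for every $t\ge0$. For the ``only if'' part, note that $\psi$ is continuous with $\psi(0)=0$ (dominated convergence, using $\int_\R(1\wedge y^2)\,\nu(dy)<\infty$); hence if $L_{t_0}$ is symmetric then $\mathrm{Im}\,\psi$ is a continuous function on the connected set $\R$ with values in the discrete set $\tfrac{\pi}{t_0}\mathbb{Z}$ and $\mathrm{Im}\,\psi(0)=0$, so $\mathrm{Im}\,\psi\equiv0$. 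This proves the ``moreover'' statement and reduces the proposition to the equivalence $\mathrm{Im}\,\psi\equiv0\iff\nu$ symmetric.

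For ``$\nu$ symmetric $\Rightarrow\mathrm{Im}\,\psi\equiv0$'' I would argue directly on the integrand $f_h(y)=e^{ihy}-1-\tfrac{ihy}{1+y^2}$: a Taylor expansion at the origin cancels the linear terms, so $|f_h(y)|\le C_h\,(1\wedge y^2)$ and $\int_\R f_h\,d\nu$ converges absolutely. Because $\overline{f_h(y)}=f_h(-y)$, the substitution $y\mapsto-y$ together with $\nu(dy)=\nu(-dy)$ gives $\int_\R f_h\,d\nu=\int_\R f_h(-y)\,\nu(dy)=\overline{\int_\R f_h\,d\nu}$, so this integral is real and $\psi(h)=-\int_\R f_h\,d\nu=\int_\R(1-\cos hy)\,\nu(dy)$, which is real and is precisely the form \eqref{psu}. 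For the converse ``$\mathrm{Im}\,\psi\equiv0\Rightarrow\nu$ symmetric'', use the reduction: $\mathrm{Im}\,\psi\equiv0$ makes $L_t\stackrel{d}{=}-L_t$ for every $t>0$, so $e^{-t\psi_L(h)}=e^{-t\psi_{-L}(h)}$ for all $t,h$; since the difference $\psi_L-\psi_{-L}$ then lies in $\tfrac{2\pi i}{t}\mathbb{Z}$ for every $t>0$, it vanishes, i.e.\ $\psi_L=\psi_{-L}$. As $-L$ has jumps $-\Delta L$, its L\'evy measure is $\nu(-\,\cdot\,)$, and (the truncation function being odd) its drift is $-a$; uniqueness in the L\'evy--Khintchine representation then yields $\nu=\nu(-\,\cdot\,)$ (and $a=0$, so a symmetric $L$ without Gaussian part is genuinely of the driftless form \eqref{psu}).

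The only genuinely delicate point is the chain ``characteristic function real $\Rightarrow$ $\mathrm{Im}\,\psi$ lattice-valued $\Rightarrow$ $\mathrm{Im}\,\psi\equiv0$'', which rests on the continuity of $\psi$; I would get it from dominated convergence with the bound $|f_h(y)|\le C_K\,(1\wedge y^2)$, uniform for $h$ in compacts. The remaining ingredients --- the estimate on $f_h$, the identity $\overline{f_h(y)}=f_h(-y)$, and uniqueness in the L\'evy--Khintchine representation --- are standard.
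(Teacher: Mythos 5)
Your proof is correct and rests on essentially the same mechanism as the paper's: both compare $\psi(h)$ with $\psi(-h)$ (equivalently, identify the characteristic triplet of $-L$ as $(0,-a,\nu(-\cdot))$) and invoke uniqueness of the L\'evy--Khintchine representation to conclude $a=0$ and $\nu=\nu(-\cdot)$. The added value of your write-up is that you justify the step the paper silently takes for granted --- that reality of $e^{-t_0\psi(h)}$ at a \emph{single} time $t_0$ already forces $\mathrm{Im}\,\psi\equiv 0$, via continuity of $\psi$, $\psi(0)=0$ and connectedness of $\R$ --- and that you actually prove the converse and the ``moreover'' clause (where, as you note at the end, one must read ``without Gaussian part'' as also excluding a drift, since otherwise a symmetric $\nu$ with $a\neq 0$ gives a non-symmetric law), neither of which the paper's proof addresses explicitly.
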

 \begin{proof}
 Since $(L_t)$ has no Gaussian part, according to the
   L\'evy-Khintchine formula, we have $\E [e^{i h L_t}]
   = e^{- t \psi (h)},\; h \in \R,$ $t \ge 0$, with
   $$
  \psi (h) =  - i  a h
    - \int_{\R} \Big( e^{i  hy  } - 1 - \frac { i h y } {1
+ y^2 } \Big ) \nu (dy), \;\; h \in \R,
$$
 for some $a \in \R$.
 Define  the reflection measure $\tilde \nu$ of $\nu$, i.e.,
 $ \tilde \nu (A) = \nu (-A) $, for any Borel set $A \subset \R$.
 It is easy to check that also $\tilde \nu $ is a L\'evy measure
 and moreover
$$
\psi (-h) =  i  a h - \int_{\R} \Big( e^{i  hy  } - 1 - \frac { i
h y } {1 + y^2 } \Big ) \tilde \nu (dy), \;\;\; h \in \R.
$$
Since $L_t$ has symmetric distribution, we must have
 $\psi (h) = \psi (-h)$, $h \in \R$. By  uniqueness of the
 L\'evy-Khintchine formula (see \cite[Theorem 8.1]{sato})
 we obtain that $(-a, \tilde \nu) = (a, \nu)$. It follows that
  $a=0$ and $\nu = \tilde \nu$. Therefore $\nu$ is symmmetric.
 \end{proof}

 \medskip We need the following lemma.

\ble \label{indip}
 Let us consider a sequence of
independent, symmetric, infinitely divisible real random variables
 $\xi_n$ defined on the same probability space
  $(\Omega, {\cal F}, \P)$
  such that
$$
 \E[e^{ih \xi_n}] =
 \exp \Big[ - \int_{\R} \big( 1 -  \cos  (hy ) \big ) \nu_n (dy)
 \Big], \;\; h
 \in \R, \; n \ge 1,
$$
where $\nu_n$ are  L\'evy measures.
 The following assertions are equivalent.

\hh (i) \;\; $\sum_{n \ge 1} \xi_n^2 < + \infty, \;\;\;\;
\P-a.s.;$ \hh (ii) \;\;  $\sum_{n \ge 1} \int_{\R} \big( 1 \wedge
y^2 \big ) \nu_n (dy) < + \infty.$
 \ele
\bpf  We will use the
   following  theorem (see, for instance \cite{K},
   page 70-71): { let $U_n$
   be a sequence of independent and symmetric real random
   variables; then the following statements are equivalent:
    $\sum_{n \ge 1} U_n$ converges in
   distribution; $\sum_{n \ge 1} U_n$
  converges $\P$-a.s.; $\sum_{n \ge 1} U_n^2 $ converges
  $\P$-a.s..}

\vv  By the previous result,  assertion  (i) is equivalent to
convergence in distribution
 of the sequence of random variables $(\sum_{n = 1}^N \xi_n)$.

We have, for any $N \in \N$, $h \in \R$, using  independence,
  $$
  \E  [e^{i h \sum_{n=1}^N  \xi_n  } ] = \prod_{n=1}^N
\E  [e^{i  h   \xi_n } ] = \prod_{n=1}^N e^{-
 \int_{\R} \big( 1 -  \cos  (h  y ) \big ) \nu_n (dy)}
$$
$$
=  e^{-   \sum_{n=1}^N
 \int_{\R} \big( 1 -  \cos  (h  y ) \big ) \nu_n (dy)}
=  e^{-
 \int_{\R} \big( 1 -  \cos  (h y ) \big )
 \big (\sum_{n=1}^N \nu_n \big) (dy)}
$$
(i) $\Rightarrow$ (ii). We are assuming  convergence in
distribution
 of the sequence  $(\sum_{n = 1}^N \xi_n )$. By \cite[Theorem 8.7]
{sato} the limiting distribution $\mu$ is again symmetric and
  infinitely
 divisible;  the characteristic function  of $\mu$ is
  given by
 $$
 \exp \Big( -\frac1 2 q h^2
 -\int_{\R} \big( 1 -  \cos   (h y)  \big ) \tilde \nu (dy)
 \Big ),\;\;\;\; h \in \R,
 $$
where
\begin{equation}\label{e1}
q\geq 0 \,\,\,{\rm and}\,\,\, \int_{\R} \big( y^2 \wedge 1 )
\tilde \nu (dy) <  + \infty .
\end{equation}
Moreover, for arbitrary bounded continuous functions
 $f$ from $\R$ into
$\R$, vanishing on a neighborhood of $0$, we have \beq
\label{incr}
 \lim_{N \to \infty} \int_{\R} f(y)\,
 \big (\sum_{n=1}^N \nu_n \big) (dy)
 = \int_{\R}f(y) \tilde \nu (dy).
\eeq
 If $f$ in \eqref{incr} is in addition a non-negative
 function, then $\int_{\R} f(y)\,
 \big (\sum_{n=1}^N \nu_n \big) (dy)$ is an increasing sequence.
 Thus, for any $N \in \N$, $f: \R \to \R_+$ continuous, bounded and
vanishing on a neighborhood of $0$,
  $$
\int_{\R} f(y)\,
 \big (\sum_{n=1}^N \nu_n \big) (dy)
 \le  \int_{\R}f(y) \tilde \nu (dy).
$$
Since the function $y \mapsto y^2 \wedge 1$ is a pointwise limit
of a monotone increasing sequence of non-negative functions $f_k$
which are continuous, bounded and vanishing on a neighborhood of
$0$, we have, for any $k \ge 1$, $N \ge 1$,
  $$
\int_{\R} f_k(y)\,
 \big (\sum_{n=1}^N \nu_n \big) (dy)
 \le  \int_{\R}f_k(y) \tilde \nu (dy) \le
 \int_{\R} \big( y^2 \wedge 1 \big ) \tilde \nu (dy).
$$
Passing to the limit, as $k \to \infty$, in the left hand-side
 of the previous formula, we
get
 assertion (ii).

\hh (ii) $\Rightarrow $ (i). By using the inequality
$$
 \int_{\R} \big( 1 -  \cos  (h  y ) \big ) \nu_n (dy)
\le  \int_{\R} (1 \wedge  (hy)^2) \nu_n (dy), \;\; h \in \R, \;\;
n \ge 1,
 $$
we obtain  that  condition (ii) implies that  the series
 $$
\sum_{n=1}^{\infty}
 \int_{\R} \big( 1 -  \cos  (h  y ) \big ) \nu_n (dy)
$$
converges uniformly in $h$ on compact sets of $\R$. By the L\'evy
convergence theorem, this gives convergence in distribution
 of the sequence  $(\sum_{n = 1}^N \xi_n )$ and
 concludes the proof.
\epf

Applying the previous  result we can   clarify when our
cylindrical L\'evy
 process $Z = (Z_t)$ takes values in $H$.

\begin{proposition} \label{d} The following conditions are
equivalent.

$$ (i) \;\;\; \sum_{n \ge 1} (\beta_n Z_{t_0}^n)^2 < + \infty, \;\;\;\;
\P-a.s.,
 \;\; \text{for some  } \;\; t_0 >0;
$$
$$  (ii) \;\;\; \sum_{n \ge 1} (\beta_n Z_t^n)^2 < + \infty, \;\;\;\;
\P-a.s.,
 \;\; \text{for any } \;\; t \ge 0;
$$
$$ \;\;\;\; (iii) \;\;\; \sum_{n \ge 1} \Big ( \beta_n^2  \int_{| y| < 1 / \beta_n }
y^2  \nu (dy) \, +\, \int_{| y| \ge 1 / \beta_n }   \nu (dy) \Big)
< + \infty.
$$
\end{proposition}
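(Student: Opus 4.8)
The plan is to reduce everything to Lemma~\ref{indip} by a scaling argument. Fix $t>0$ and set $\xi_n := \beta_n Z_t^n$. Since the $Z^n$ are independent, symmetric, identically distributed L\'evy processes with no Gaussian part (Hypothesis~\ref{bas}), each $\xi_n$ is symmetric and infinitely divisible, the $\xi_n$ are independent, and by \eqref{ps}--\eqref{psu},
\[
\E[e^{ih\xi_n}] = \E[e^{i(h\beta_n)Z_t^n}] = \exp\Big(-t\int_\R \big(1-\cos(h\beta_n y)\big)\,\nu(dy)\Big), \qquad h\in\R.
\]
Changing variables $z=\beta_n y$ (legitimate since $\beta_n>0$), the inner integral equals $\int_\R(1-\cos(hz))\,\nu_n(dz)$, where $\nu_n$ is the image of the measure $t\,\nu$ under the dilation $y\mapsto\beta_n y$, i.e. $\nu_n(B)=t\,\nu(\beta_n^{-1}B)$. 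Thus each $\xi_n$ has exactly the form required in Lemma~\ref{indip}, with L\'evy measure $\nu_n$.

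By Lemma~\ref{indip}, $\sum_{n\ge1}\xi_n^2<+\infty$ $\P$-a.s. if and only if $\sum_{n\ge1}\int_\R(1\wedge z^2)\,\nu_n(dz)<+\infty$. The next step is to evaluate this last integral by undoing the change of variables:
\[
\int_\R (1\wedge z^2)\,\nu_n(dz) = t\int_\R \big(1\wedge(\beta_n y)^2\big)\,\nu(dy) = t\Big(\beta_n^2\!\!\int_{|y|<1/\beta_n}\!\!\! y^2\,\nu(dy) + \!\!\int_{|y|\ge1/\beta_n}\!\!\!\nu(dy)\Big),
\]
the last equality obtained by splitting according to whether $(\beta_n y)^2$ is below or above $1$. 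Hence, for every fixed $t>0$, the statement ``$\sum_{n\ge1}(\beta_n Z_t^n)^2<+\infty$ $\P$-a.s.'' is equivalent to
\[
\sum_{n\ge1}\Big(\beta_n^2\int_{|y|<1/\beta_n} y^2\,\nu(dy) + \int_{|y|\ge1/\beta_n}\nu(dy)\Big)<+\infty,
\]
which is precisely (iii), and which crucially does not depend on $t$ (the positive factor $t$ is irrelevant to convergence of the series).

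It then remains only to assemble the equivalences. Applying the displayed conclusion with $t=t_0$ gives (i)$\Leftrightarrow$(iii); applying it for an arbitrary $t>0$ shows that (iii) implies $\sum_{n\ge1}(\beta_n Z_t^n)^2<+\infty$ $\P$-a.s. for all $t>0$, and together with the trivial case $t=0$ (where $Z_0^n=0$) this yields (iii)$\Rightarrow$(ii); finally (ii)$\Rightarrow$(i) is immediate. I expect the only slightly delicate point to be the bookkeeping in the scaling step --- verifying that the L\'evy measure of $\beta_n Z_t^n$ is the dilated measure $t\,\nu(\beta_n^{-1}\cdot)$ and that integrating $1\wedge z^2$ against it reproduces exactly the $n$-th summand in (iii); beyond that, the argument is a direct invocation of Lemma~\ref{indip}.
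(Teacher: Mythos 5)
Your proposal is correct and follows essentially the same route as the paper: identify $\beta_n Z_t^n$ as independent symmetric infinitely divisible variables whose L\'evy measures are the dilations of $\nu$ by $\beta_n$, apply Lemma~\ref{indip}, and observe that the resulting convergence criterion is (iii) independently of $t>0$. The only cosmetic difference is that you carry the factor $t$ explicitly inside the L\'evy measure, which the paper suppresses since it is irrelevant to convergence.
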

\begin{proof}
We first show that (i) implies  (iii).  Assertion (i) is
equivalent to convergence in distribution
 of the sequence of random variables $(\sum_{n = 1}^N \beta_n Z_{t_0}^n)$
 (see the result mentioned at the beginning of the proof of Lemma
 \ref{indip}).
 We have, for any $n \ge 1$, $h \in \R$,
$$
  \E  [e^{i h  \beta_n Z_{t_0}^n } ] =  e^{- {t_0}
 \int_{\R} \big( 1 -  \cos  (h \beta_n y ) \big ) \nu (dy)}
 =    e^{- {t_0}
 \int_{\R} \big( 1 -  \cos  (h y ) \big )
   \nu_n  (dy)},
$$
where $\nu_n$ is the image law of $\nu$ by the transformation:
 $ y \mapsto \beta_n y$. Setting $\xi_n = \beta_n Z_{t_0}^n$,
 by Lemma \ref{indip}  assertion (i) is equivalent to
 $$
 \sum_{n \ge 1} \int_{\R} \big( 1 \wedge y^2 \big )
\nu_n (dy) < + \infty.
$$
Now assertion (iii)  follows since, for any $n \in \N,$
$$
\int_{\R} \big( y^2 \wedge 1 \big ) \nu_n  (dy) =
  \int_{|y| <  1}  y^2
  \nu_n  (dy) + \int_{|y| \ge 1}
 \nu_n  (dy)
$$
$$
=  \int_{|\beta_n y| <  1}  \beta_n^2 y^2
  \nu  (dy) + \int_{|\beta_n y| \ge 1}
 \nu  (dy).
$$
Using again Lemma \ref{indip} we get that (iii) implies (ii) as
well. The proof is complete.
\end{proof}

\begin{remark} \label{pz}
{\em Theorem 4.13 in  \cite{PeZ} states  that
 if condition (ii)  in Proposition \ref{d} holds then also (iii) is
 satisfied. However such theorem does not require symmetricity of
 the L\'evy process $Z$.}
 \end{remark}

\begin{remark} \label{by}
{\em If $(Z_t^n)$ are symmetric $\alpha$-stable processes,
 $\alpha \in (0,2) $,  then $\nu (dy) = \frac{1}{|y|^{1+ \alpha}} dy$
  and so  (ii) of Proposition \ref{d} is equivalent to
  $$
 \sum_{n \ge 1} \beta_n^{\alpha} < + \infty
 $$
as in \cite{PZ7}. }
 \end{remark}

\begin{remark} \label{levy} {\em
 Using
  Proposition \ref{d},
  one gets that our cylindrical L\'evy process $Z$
  is a L\'evy process with values
  in the space $l^2_{\rho}$, see (\ref{rho}),
   where $(\rho_n)$ is a sequence of positive numbers
    such that
 $$
 \sum_{n \ge 1}  \Big ( (\rho_n\beta_n)^2  \int_{|\rho_n \beta_n  y| < 1 }
y^2  \nu (dy) \, +\, \int_{|\rho_n \beta_n y| \ge 1 }   \nu (dy)
\Big) < + \infty.
 $$
}
\end{remark}
Let us come back to the Ornstein-Uhlenbeck process.
 According to Hypothesis \ref{basic},
 we may consider our equation \eqref{ab}
 as an infinite sequence of independent
  one dimensional stochastic equations, i.e.,
 \beq \label{eq}
 dX^n_t = -
 \gamma_n X^n_t dt + \beta_n d Z^n_t,\;\;\;\; X^n _0 = x_n, \;\;
 n \in \N,
\eeq
 with   $x = (x_n) \in l^2 = H$.  The  {\it solution} is
 a stochastic process $X = (X_t^x) $ which takes  values in
   $ \R^{\N}$ with  components
 \beq \label{xt}
 X^n_t  = e^{- \gamma_n t }x_n +
  \int_0^t e^{- \, \gamma_n (t- s)
 } \beta_n dZ_s^n,\;\;\; n \in \N,\;\; t \ge 0.
 \eeq

 \bth \label{bo1}
 Assume  Hypotheses \ref{bas} and \ref{basic} and consider the process
  $X = (X_t^x)$
   given in \eqref{xt}, $x \in H$. Define
     $\psi_{0} (u) = \int_{\{ | y| \le u \} } y^2 \nu (dy) $ and
 $\psi_{1} (u) = \int_{\{ |  y| > u \} }  \nu (dy) $.

   The following assertions are
  equivalent.
 \begin{align}\nonumber  & (i) \;\;\; X_{t_0}^x \in H, \;\;\;\; \P-a.s.,
    \;\; \text{for some  } \;\; {t_0} >0;
\\
\nonumber & (ii) \;\;\; X_t^x \in H, \;\;\;\; \P-a.s.,  \;\;
\text{for any } \;\; t\ge 0;
\\
\nonumber  & (iii) \;\;\; \sum_{n \ge 1} \frac{1}{\gamma_n}
 \int_{\frac{1}{\beta_n }}^ {\frac{1}{\beta_n } e^{\gamma_n  }}
  \Big ( \frac{1}{u^3} \psi_{0} (u) + \frac{1}{u} \psi_{1}
  (u) \Big )
  du < + \infty.
 \end{align}
 Moreover, under one of the previous assertions,  we have
  \begin{align} \label{mildo}  &  X_t^x = \sum_{n \ge 1} X_t^n e_n  =
e^{tA} x + Z_A(t),\;\;\; \mbox{where} \;\; \\
\nonumber &  Z_A(t) = \int_0^t e^{(t-s)A} dZ_s = \sum_{n \ge 1}
\Big( \int_0^t e^{- \, \gamma_n (t- s)
 } \beta_n dZ_s^n \Big) e_n,
  \end{align}
   and  the process
   $(X_t^x)$ is ${\cal F}_t$-adapted  and
    Markovian.
 \eth \bpf \noindent  {\bf I step. } We  show that (i) is equivalent to the following
 condition
\begin{equation} \label{ct}
\sum_{n \ge 1} \frac{1}{\gamma_n}
 \int_{\frac{1}{\beta_n }}^ {\frac{1}{\beta_n } e^{\gamma_n {t_0} }}
  \Big ( \frac{1}{u^3} \psi_{0} (u) + \frac{1}{u} \psi_{1}
  (u) \Big )
  du < + \infty.
\end{equation}
  Let  us consider
   the stochastic convolution
 \beq \label{yy}
Y_t^n=  Z_A^n(t) = \int_0^t e^{- \, \gamma_n (t- s)
 } \beta_n dZ_s^n,\;\;\; n \in \N, \;\; t \ge 0,
 \eeq
 where the stochastic integral is a limit
  in probability of Riemann sums.
 We have, for any $h \in \R,$ see \eqref{ps},
 \beq \label{ritorna}
    \E  [e^{i h Y^n_{t_0}} ]
   = \exp \Big[
   -  \int_0^{t_0} \psi \big ( e^{-
 \gamma_n \, s } \beta_n h \big) ds \Big]
\eeq
 where $\psi $ is given in \eqref{ps}.
 By the Fubini theorem
 $$
\int_0^{t_0} \psi \big ( e^{-
 \gamma_n \, s } \beta_n h \big) ds =
\int_0^{t_0} ds
 \int_{\R} \big( 1 -  \cos  ( e^{- \gamma_n s } \beta_n
  h  y )  \big) \nu (dy)
$$
$$
= \int_0^{t_0} ds
 \int_{\R} \big( 1 -  \cos  ( hy
   )  \big) \nu_{ns} (dy) =
  \int_{\R} \big( 1 -  \cos  (h  y ) \big )
  \tilde \nu_{n}  (dy),
$$
 where $\nu_{ns}$ is the image law of
  $\nu$ by the transformation:
  $ y \mapsto \beta_n e^{- \gamma_n s } y$ and
   we have set
\begin{equation}\label{e2} \tilde \nu_{n}  (B)=\big(
  \int_0^{t_0} \nu_{ns} ds\big) (B) =
  \int_0^{t_0}  \Big (
  \int_{\R} I_{B} (\beta_n e^{- \gamma_n s } y ) \nu(dy)
  \Big) ds,
\end{equation}
for any Borel set $B \subset \R$ ($I_B$ is the indicator function
of $B$).
Setting $\xi_n =  Y_{t_0}^n$,
 by Lemma \ref{indip},  assertion (i) is equivalent to
  \begin{equation} \label{d4}
 \sum_{n \ge 1} \int_{\R} \big( 1 \wedge y^2 \big )
\tilde \nu_n (dy) =
 \sum_{n \ge 1} \int_0^{t_0} \Big( \int_{\R} \big( 1 \wedge y^2 \big )
 \nu_{ns} (dy) \Big) ds < + \infty.
\end{equation}
 Let us fix $n \ge 1$. We have
  $$
 \int_0^{t_0} \Big( \int_{\R} \big( 1 \wedge y^2 \big )
 \nu_{ns} (dy) \Big) ds =
  \int_0^{t_0} \Big( \int_{\R} \big( 1 \wedge (\beta_n e^{- \gamma_n s } y
  )^2 \big )
 \nu_{} (dy) \Big) ds
$$
$$
 = \int_0^{t_0}
  \Big( \beta_n^2
  e^{- 2\gamma_n s }  \int_{|y| \le \frac{e^{ \gamma_n s }}{\beta_n}}
  y^2
 \nu_{} (dy) \, +\,  \int_{|y| > \frac{{e^{ \gamma_n s }}}{\beta_n}}
 \nu_{} (dy) \Big) ds
$$
$$
=  \frac{1}{\gamma_n } \int_{\frac{1}{\beta_n}}^{\frac{e^{
\gamma_n t_0 }}{\beta_n} }
  \Big( \frac{1}{u^3}\int_{|y| \le u}
  y^2
 \nu_{} (dy) \, +\,  \frac{1}{u} \int_{|y| > u }
 \nu_{} (dy) \Big) du.
$$
This shows that \eqref{d4} is exactly \eqref{ct}.

\smallskip
\noindent  {\bf II step. } In order to prove equivalence between
(i), (ii) and
  (iii)  it remains to  show that
 (i) implies (ii).

 Note that if \eqref{ct} holds for ${t_0}>0$, then it is also
satisfied for any $0 \le s \le {t_0}$. Therefore, assertion (i)
implies that
 \begin{equation} \label{45}
 X_s^x \in H, \;\;\;\; \P-a.s.,
    \;\; \text{for any  } \;\; s \in [0,{t_0}].
\end {equation}
and so $Z_A(s) \in H, \; \P-a.s.,
    \; \text{for any  } \; s \in [0,{t_0}].$

We have  the following identity on the product space $\R^{\N}$,
$\P$-a.s.,
 \begin{equation} \label{mar}
 Z_A(T+h ) - e^{hA} Z_A (T) = \int_T^{T+h} e^{(T+h
 -s)A}dZ_s = \int_0^{h} e^{(h
 - u)A}dZ_u^T,
\end{equation}
for any $T, h \ge 0$, where $Z_u^T = Z_{T+u} - Z_T$, $u \ge 0$, is
 still a L\'evy process with values in $\R^{\N}$.
  Note that
  $$\int_0^{h} e^{(h
 - u)A}dZ_u^T $$
 has the same law of $Z_A(h)$.

Combining \eqref{45} and identity \eqref{mar} with $T={t_0}$ and
$h \in [0,{t_0}]$, we deduce that $\P (Z_A (r) \in H ) =1$, for
any $r \in [{t_0},2{t_0}]$. By an iteration procedure, we infer
that $\P (Z_A (r) \in H ) =1$, for any $r \ge 0$.  This
immediately implies condition (ii).
 The first part of the proof is finished.


\smallskip
\noindent  {\bf III step. }  The property  that $(X^x_t)$ is
${\cal F}_t$-adapted is
  equivalent to
 the fact that each
  real process $\langle X_t^x, e_k \rangle$ is ${\cal F}_t$-adapted,
   for any $k \ge  1$, and this  clearly holds.

 The Markov property follows easily from the identity
  \eqref{mar}.
   \epf

\begin{remark}
{\em  Note that
$$
\lim_{\gamma \to 0} \frac{1}{\gamma}
 \int_{\frac{1}{\beta_n }}^ {\frac{1}{\beta_n } e^{\gamma  }}
  \Big ( \frac{1}{u^3} \psi_{0} (u) + \frac{1}{u} \psi_{1}
  (u) \Big )du = \beta_n ^2 \psi_0 (\frac{1}{\beta_n } )
  + \psi_1(\frac{1}{\beta_n } ).
$$
This shows that Proposition \ref{d} is a ``limiting case''
 of Theorem \ref{bo1},
obtained  when $\gamma_n =0$, for any $n \ge 1$. }\end{remark}

\begin{remark}\label{kotelenez}
{\em If the cylindrical L\'evy process $Z$  takes values in the
Hilbert space $H$, i.e.,
 if  condition (ii) of Proposition \ref{d} holds,  then, by the Kotelenez
regularity result
 (see \cite[Theorem 9.20]{PeZ})  trajectories of the process $X$
 which solves \eqref{ab} are c\`adl\`ag with values in $H$.
  However such condition (ii)
  is a very restrictive assumption (see also
  Remark \ref{by}). We conjecture that the
c\`adl\`ag property holds under much weaker conditions but, at the
moment, this is an open problem. }
\end{remark}

In the next result we provide an application of  Theorem \ref{bo1}
 to  Ornstein-Uhlenbeck processes driven by a quite general class of
  symmetric  cylindrical L\'evy noises (this class  in particular
   includes the $\alpha$-stable cylindrical processes).

\bpr \label{bo11}
 Assume  Hypotheses \ref{bas} and \ref{basic}.  Moreover, assume that
  $(\beta_n)$ is a bounded sequence and that the symmetric L\'evy
   measure $\nu$ appearing in \eqref{psu} satisfies
 \beq
\label{psu1}
  \int_{1}^{+ \infty} \log(y) \,  \nu (dy) < + \infty.
\eeq
Finally, assume that
$$
\sum_{n \ge 1}\frac{1}{\gamma_n} < + \infty.
$$
 Then  the Ornstein-Uhlenbeck  process
  $X = (X_t^x)$
   given in \eqref{xt}  verifies  assertions (i)-(iii) of Theorem \ref{bo1}.
   Moreover, $X$ has an unique invariant measure.
    \end{proposition}
 \bpf
  First remark that (ii) of Theorem \ref{bo1} is equivalent to
  $$
\sum_{n \ge 1} \beta_n^2
\Big( \int_0^t e^{- \, \gamma_n (t- s)
 }  dZ_s^n \Big)^2  < +\infty,\;\;\; \P-a.s., \;\;\; t \ge 0.
$$
Therefore, it is enough to check the result assuming that $\beta_n=1$,
for any $n \ge 1$.

 We will check condition (iii), i.e.,
  \beq \label{fr}
 \sum_{n \ge 1} \frac{1}{\gamma_n}
 \int_{{1}}^ { e^{\gamma_n  }}
  \Big ( \frac{1}{u^3} \psi_{0} (u) + \frac{1}{u} \psi_{1}
  (u) \Big )
  du < + \infty.
\eeq
Let us fix $0< 1< b$. We first estimate the function $f_0$,
$$
 f_0 (b) = \int_{{1}}^ { b }
  \Big ( \frac{1}{u^3} \psi_{0} (u) + \frac{1}{u} \psi_{1} (u) \Big )
  du.
$$
We have, by using symmetricity and Fubini theorem,
$$
\int_1^b \frac{1}{u} \psi_{1}
  (u)  du  =  2  \int_1^b \frac{1}{u} \big( \int_{y >u}
  \nu (dy) \big) du
$$
$$
=  2  \int_1^b \frac{1}{u} du  \int_0^{+\infty}
  1_{(u, +\infty)} (y)
  \nu (dy) =   2  \int_0^{+\infty} \Big( \int_1^b
    \frac{1}{u}
  1_{(u, +\infty)} (y) du \Big)
  \nu (dy)
$$
$$
=  2  \int_0^{+\infty} \Big( \int_{1 \wedge y}^{b \wedge y}
    \frac{du}{u}  \Big)
  \nu (dy) =  2  \int_0^{+\infty} \big( \log (b \wedge y)
   -  \log (1 \wedge y) \big)
  \nu (dy)
$$
$$
  = 2  \int_1^{b}  \log ( y)
  \nu (dy) +
     2  \log (b) \, \nu ((b,{+\infty}) )
$$
and, similarly,
$$
\int_1^b \frac{1}{u^3} \psi_{0}
  (u)  du  =  2  \int_1^b \frac{1}{u^3}   du  \int_0^{+\infty}
  1_{[0, u]} (y)
  \, y^2\, \nu (dy)
  $$
$$
=  2  \int_0^{+\infty} y^2  \Big( \int_{1 \vee y}^{b \vee y}
    \frac{du}{u^3}  \Big)
  \nu (dy) =    \int_0^{+\infty} y^2 \, \Big( \frac{1}{(1 \vee y)^2}
   -  \frac{1}{(b \vee y)^2} \Big)
  \nu (dy)
$$
$$
= ( {1}
   -  \frac{1}{b ^2})  \int_0^{1} y^2
  \nu (dy) +
    \int_1^{b} y^2 \, \Big( \frac{1}{ y^2}
   -  \frac{1}{b^2} \Big)
  \nu (dy)
$$
$$
  =   \int_1^{b}
  \nu (dy) +    \int_0^{1} y^2
  \nu (dy) - \frac{1}{b^2} \int_0^{b} y^2
  \nu (dy).
$$
We have the following estimate, for any $b \in (1, \infty)$,
$$
0 \le  f_0 (b) \le   2  \int_1^{+ \infty}  \log ( y)
  \nu (dy) +
     2  \log (b) \, \nu ((b,{+\infty}) )
  +
 \int_1^{+ \infty }
  \nu (dy) +    \int_0^{1} y^2
  \nu (dy).
$$
 Setting $C =  2  \int_1^{+ \infty}  \log ( y)
  \nu (dy) +
 \int_1^{+ \infty }
  \nu (dy) +
  \int_0^{1} y^2
  \nu (dy) < + \infty $, we find
$$
  f_0 (b) \le C +  2  \frac{\log (b)}{\log (b)}
  \int_b^{+ \infty}  \log ( y)
  \nu (dy) \le 3 C, \;\;\; b \ge 1.
$$
Note that \eqref{fr} is equivalent to
 $$
 \sum_{n \ge 1} \frac{1}{\gamma_n}
  f_0{ (e^{\gamma_n  })} \le 3 C
   \sum_{n \ge 1} \frac{1}{\gamma_n}
   < + \infty.
$$
The proof of the first part of the theorem is complete.

 To show that there exists an invariant measure we first note that
(according to \cite[Theorem 17.5]{sato}) each
 one dimensional Ornstein-Uhlenbeck process  $(X_t^n)$ has an
  invariant
 measure $\mu_n$  which is the law of the random variable
 $$
 \int_0^{\infty} e^{- \, \gamma_n u
 } \beta_n dZ_u^n
$$
 having characteristic function $\hat \mu_n (h)
= \exp \Big( - \int_0^{\infty} \psi (e^{- \, \gamma_n
 s} \beta_n h) ds \Big)$, $h \in\R$.

Let us consider the product measure $\mu = \prod_{n \ge 1} \mu_n$
 on $\R^{\N}$. This is the law of the $\R^{\N}$-random variable
  $\xi = (\xi_n)$, where
  $$
 \xi_n = \int_0^{\infty} e^{- \, \gamma_n u
 } \beta_n dZ_u^n, \;\;\; n \ge 1.
$$
 According to Lemma \ref{indip},
 $\xi$ takes values in $H$ if and only if the L\'evy measures
  $\nu_n$ of $\xi_n$ verify
  $$
 \sum_{n \ge 1} \int_{\R} \big( 1 \wedge
y^2 \big ) \nu_n (dy) < + \infty.
$$
This condition is equivalent to
$$
\sum_{n \ge 1} \frac{1}{\gamma_n}
 \int_{1}^ { +\infty }
  \Big ( \frac{1}{u^3} \psi_{0} (u) + \frac{1}{u} \psi_{1}
  (u) \Big )
  du  = \sum_{n \ge 1} \frac{1}{\gamma_n}
  \, \Big(\sup_{b \ge 1} f_0 (b) \Big)
   < + \infty
$$
 which holds. This shows that $\mu (H) =1$ and so $\mu$
  in a Borel probability measure on $H$.

We will prove that $\mu$ is the unique invariant measure of
 $X$ by showing that, for any $x \in H$,
 \begin{equation} \label{inv}
 \lim_{t \to \infty} X^x_t = \xi
\end{equation}
 in probability  (see \cite{DZ1}). It is enough to prove \eqref{inv}
 when $x=0$. Let $X_t^0 = Y_t$ and fix any $\epsilon >0$.
  By using  characteristic function, one checks easily that
    the law of $Y_t$ is the same as the one of
 $
  \int_0^{t} e^{- \, \gamma_n u
 } \beta_n dZ_u^n.
 $
We find
$$
 a_t = \P ( |Y_t - \xi |^2 > \epsilon) =
 \P \big( \sum_{n \ge 1}
  \beta_n^2 \, \big(\int_t^{\infty} e^{- \, \gamma_n u
 }  dZ_u^n   \big)^2 > \epsilon \big)
 $$
Now, for any $t>0$, we consider new independent L\'evy processes
 $(Z_r^{t,n})_{r \ge 0}$, where
 $Z_r^{t,n} = Z_{r+t}^{n} - Z_{t}^{n} $, $r \ge 0$, $n \ge 1$.
    For any $t>0$, $\int_t^{\infty} e^{- \, \gamma_n u
 }  dZ_u^n$ has the same law as
 $$
  \int_0^{\infty} e^{- \, \gamma_n (t+ s)
 }  d Z_s^{t,n} =  e^{- \, \gamma_n t
 } \int_0^{\infty} e^{- \, \gamma_n s
 }  d Z_s^{t,n}
$$
 which coincides with the law of $e^{- \, \gamma_n t
 } \frac{\xi_n}{\beta_n}$. By using independence, for any
 $t>0$, the law of
   $ \sum_{n \ge 1} \beta_n^2
   \big(\int_t^{\infty} e^{- \, \gamma_n u
 }  dZ_u^n   \big)^2
$ coincides with the one of
$$
\sum_{n \ge 1} e^{- \, 2\gamma_n t} \xi_n^2.
$$
Assume that $\gamma_n \ge \gamma_0 >0$, $n \ge 1$. We have, for any $t>0$,
$$
 a_t =  \P \big( \sum_{n \ge 1}
   \, e^{- \, 2\gamma_n t} \xi_n^2
    > \epsilon \big) \le
     \P \big( e^{- \, 2\gamma_0 t} \, \sum_{n \ge 1}
   \, \xi_n^2
    > \epsilon \big) =  \P \big(  | \xi|^2
    \, > e^{\, 2\gamma_0 t} \epsilon \big).
$$
 By Letting $t \to \infty$, we find  $\lim_{t \to \infty} a_t =0$.
 This proves \eqref{inv} with $x=0$ and concludes the proof.
\end{proof}

\begin{example}\label{E1}   {\em Consider
  the  following linear stochastic  heat
  equation  on $D= [0, \pi]^d$ with
  Dirichlet boundary conditions
 \beq \label{heat1}
  \left\{
\bal dX(t, \xi)  &  = \triangle X(t, \xi) \,dt  + \,dZ(t, \xi), \;\;\; t>0,\\
      X(0, \xi) & = x(\xi), \;\;\; \xi \in D,
\\ X(t, \xi) &=0, \;\; t>0, \;\;\; \xi \in \partial D,
\eal \right. \eeq
 where $Z$ is a  cylindrical L\'evy process with
respect to the basis of  eigenfunctions of the Laplacian $\Delta$
in $H = L^2(D)$ (with  Dirichlet boundary conditions). The
eigenfunctions are
 $$
 e_j (\xi_1, \ldots, \xi_d) = (\sqrt{2/\pi})^d \sin (n_1 \xi_1)
 \cdots \sin(n_d \xi_d), \;\;\; \xi = (\xi_1, \ldots, \xi_d) \in
 \R^d,  $$
 $j=(n_1, \ldots, n_d) \in
 \N^d$.
 The corresponding  eigenvalues
 are $-\gamma_j$, where $\gamma_j =  (n_1^2 + \ldots + n_d^2 )$.
 The operator $A = \triangle $ with
  $D(A) = H^2(D) \cap H^1_0(D)$ verifies
   Hypothesis \ref{basic}.
} \qed
\end{example}

\section{Irreducibility  }

We start with a simple lemma, which we prove for the reader
  convenience.

\ble \label{facile}
 Let us consider a sequence $(\xi_n)$ of
  independent real random variables,  defined on the same
   probability space
$(\Omega, {\cal F}, \P)$ such that
 $$
 \sum_{n \ge 1} \xi_n^2 < + \infty,\;\;\; \P-a.s..
$$
If each $\xi_n $ has full support in $\R$, then the random
variable $ \xi =  (\xi_n) $ has full support in
 the Hilbert space $l^2$.
\ele
 \bpf
 We fix an arbitrary ball $B \subset l^2$, $B = B(y, r)$ with center
 in $y= (y_k) \in l^2$ and radius $r>0$.
 Using independence, we find
 \beqr
&& \P \Big(\sum_{k \ge 1} (\xi_k - y_k)^2 < r^2 \Big)
 \ge \P \Big(\sum_{k = 1}^N (\xi_k - y_k)^2 < \epsilon,
 \, \sum_{k > N} ( \xi_k - y_k)^2 < r^2 - \epsilon \Big)
\\ && \ge \P \Big(\sum_{k = 1}^N ( \xi_k - y_k)^2 < \epsilon \Big)
 \, \P \Big(\sum_{k > N} ( \xi_k - y_k)^2 < r^2 - \epsilon \Big).
   \eeqr
Now we use that each $\xi_n$ has full support in  $\R$. This
implies that, for any $N \in \N$, $\epsilon
>0$,
 $ \P \Big(\sum_{k = 1}^N ( \xi_k - y_k)^2 < \epsilon \Big) >0.
 $ Since $\P (
 \, \sum_{k > N} (\xi_k - y_k)^2 < r^2 - \epsilon) \to 1$,
 as $N \to \infty$, the assertion follows.
\epf

We  need the following result, which is a consequence   of
\cite[Theorem 24.10]{sato}.

\bth \label{Sato} Consider
  a  symmetric  infinitely divisible law $\mu$ on $\R$.
   If the support of its  L\'evy measure
 $M$  contains $0$
 (i.e.,   for any $\delta>0$,  $M(
 (- \delta, \delta) ) >0\, )$,
  than the support
 of $ \mu $ is $\R$.
  \eth
 \bpf  Arguing as in the proof of Proposition
  \ref{use} we get that  $M$ is symmetric.
 Therefore, the support of $M $, which contains $0$,
    has non-empty intersection
  with $(0, + \infty)$ and with $(- \infty, 0)$. By assertion
 (ii) in \cite[Theorem 24.10]{sato}, we get that the support of
  $\mu $ is $\R$.
\epf

Now  we  prove irreducibility of solutions
 to \eqref{ab}.

  \bth \label{irr}   Assume  Hypotheses \ref{bas} and
   \ref{basic}. Moreover,
  suppose  that  the support of the L\'evy measure $\nu$
    given in \eqref{psu}   contains $0$.

     Then, for
any $x \in H$, the OU
 process $(X_t^x)$ given in \eqref{xt}
  is irreducible, that is,  for any open ball $B \subset H$, $t>0,$
 we have $ \P (X^x_t \in B)
>0.$
  \eth

\bpf According to Lemma \ref{facile}, it is enough to prove that
 the one dimensional Ornstein-Uhlenbeck processes, starting from 0,
 $$
 Y_t^n=  Z_A^n(t) = \int_0^t e^{- \, \gamma_n (t- s)
 } \beta_n dZ_s^n,\;\;\; n \in \N, \;\; t > 0,
$$
are irreducible. To this purpose, we fix $t>0$ and denote by
 $\mu_n$  the symmetric and infinitely divisible law
   of $Y_t^n$,
 having L\'evy measure $\tilde \nu_{n}$ of the form (\ref{e2}).

Using the fact that   $\mu_n$ is symmetric and  Theorem
\ref{Sato},
  to show  that the   support of $\mu_n$ is full in $\R$,
   we need to  check that the support of $\tilde \nu_{n}$
 contains 0. This follows easily from
 the assumption on $\nu$ and formula \eqref{e2}.
 \epf

\end{document}